\documentclass[reqno, 11pt]{amsart}
\usepackage[utf8]{inputenc}
\usepackage{amssymb,amsmath,amsfonts,amsthm,calrsfs,mathtools}
\usepackage[all]{xy}
\usepackage{color}
\usepackage{float}
\usepackage{orcidlink}
\usepackage{subcaption}
\usepackage[english]{babel}
\usepackage[T1]{fontenc}
\usepackage{graphicx}
\usepackage{multicol}
\usepackage{latexsym}
\usepackage{enumitem}
\usepackage{tabularx}
\usepackage{mdframed}
\usepackage{booktabs}
\usepackage{xcolor}
\usepackage{tikz}
\usepackage{pgfplots}
\pgfplotsset{compat=1.18}
\usetikzlibrary{arrows.meta,decorations.markings,decorations.pathreplacing,calc,patterns,hobby,positioning}

\definecolor{teal}{RGB}{15,110,86}
\definecolor{coral}{RGB}{153,60,29}
\definecolor{purple}{RGB}{83,74,183}
\definecolor{amber}{RGB}{186,117,23}
\definecolor{gray1}{RGB}{68,68,65}
\usepackage[a4paper,top=3cm,bottom=2cm,left=3cm,right=3cm,marginparwidth=1.75cm]{geometry}
\usepackage[colorinlistoftodos]{todonotes}

\tikzset{
  arr/.style={-{Stealth[length=5pt,width=4pt]},thick},
  arr_thin/.style={-{Stealth[length=4pt,width=3pt]},thin},
}

\theoremstyle{plain}
\newtheorem{theorem}{Theorem}
\newtheorem{lemma}{Lemma}
\newtheorem{proposition}{Proposition}
\newtheorem{corollary}{Corollary}

\newtheorem{maintheorem}{Theorem}

\theoremstyle{definition}

\newtheorem{remark}{Remark}

\DeclarePairedDelimiter{\floor}{\lfloor}{\rfloor}
\DeclareMathOperator{\sgn}{sgn}

\title{
Limit Cycles in a Discontinuous Generalized Liénard Systems with Bivariate Perturbation}
\author{Carlos F.\ \'{A}lvarez \orcidlink{0000-0001-5717-6531}}
\author{Alexander Sierra-Ortíz \orcidlink{0009-0000-5961-7062}}
\address{Departamento de Matem\'{a}ticas, Universidad del Atl\'{a}ntico, Cra~30 \#~8-49, 081001, Puerto Colombia, Colombia}
\email[(Carlos F.\ \'{A}lvarez)]{cfalvarez@mail.uniatlantico.edu.co}
\email[(Alexander Sierra-Ortiz)]{ajosesierra@mail.uniatlantico.edu.co}

\date{\today}
\subjclass[2020]{Primary 34C07; Secondary 34A36, 34C23, 37G15}
\keywords{Limit cycles, Li\'{e}nard systems, averaging theory, piecewise smooth systems, bivariate perturbation.}

\begin{document}
 
\begin{abstract}
We study the number of limit cycles of the piecewise smooth differential system \[ \dot{x}=y, \quad \dot{y}=-x-\varepsilon\bigl(f(x)y+\operatorname{sgn}(x)g(x,y)\bigr),\] 
where $f(x)$ is a polynomial of degree $n\geq 1$, $g(x,y)$ is a bivariate polynomial of degree $m\geq 2$, and $\varepsilon$ is a sufficiently small parameter. Using the first-order averaging theorem for discontinuous systems, we obtain that the number $H_{m,n}=\lfloor n/2\rfloor+\lfloor m/2\rfloor$ is a lower bound for the number of limit cycles bifurcating from the linear center $\dot{x}=y, \ \dot{y}=-x$. The bound is sharp, and we provide an explicit example that attains $H_{m,n}$ limit cycles.
\end{abstract}

\maketitle
\markright{}

\section{Introduction}

The study of oscillations is central to nonlinear dynamics: many physical systems, from mechanical oscillators to electrical circuits, settle into a repeating motion that is stable under small perturbations. Mathematically, such a motion is a \emph{limit cycle}, an isolated periodic orbit of a differential equation, meaning that nearby trajectories either converge to it or move away from it, but no other periodic orbit lies arbitrarily close. A classical model exhibiting this behaviour is the \emph{generalized Liénard equation}
\begin{equation}
\ddot{x} + f(x)\dot{x} + g(x) = 0, \label{eq:lienard_smooth}
\end{equation}
introduced by Alfred Liénard in 1928~\cite{LienardOriginal1928}. Explicit solutions of \eqref{eq:lienard_smooth} are known only for special choices of $f$ and $g$~\cite{Feng2004}, so a qualitative approach to its dynamics is generally required. When $f$ and $g$ are polynomials, counting the limit cycles of~\eqref{eq:lienard_smooth} is part of Hilbert's 16th problem, which asks for a bound on the number of limit cycles of planar polynomial systems in terms of their degree~\cite{HilbertProblem, Ilyashenko2002}. Lins, de~Melo and Pugh~\cite{LinsDelMeloPugh1977} conjectured in 1977 that the classical equation $\ddot{x}+f(x)\dot{x}+x=0$, with $\deg f=n$, has at most $\lfloor n/2\rfloor$ limit cycles. This was proved for $n\leq 3$ but disproved for $n\geq 6$ by De~Maesschalck and Dumortier~\cite{DeMaesschalckDumortier2011}, who exhibited $\lfloor(n-1)/2\rfloor+2$ limit cycles in that range.

In addition to the smooth case, discontinuous Liénard equations arise naturally in mechanicscontrol theory, and electrical engineering, where the right-hand side switches abruptly across a surface~\cite{FilippovConvention}. The general case of Hilbert's 16th problem for such piecewise smooth systems remains open, but several particular cases have been solved: Llibre, Ram\'irez and Sadovskaia~\cite{LlibreRamirezSadovskaia2010} solved the problem for algebraic limit cycles, and~\cite{LlibreNovaesTeixeira2015, LlibreZhang2019} treated further particular cases and discontinuous piecewise linear differential systems. Martins and Mereu~\cite{MartinsMereu2014} studied
\begin{equation}
  \dot{x}=y,\qquad
  \dot{y}=-x-\varepsilon\bigl(f(x)\,y+\sgn(y)(k_1 x+k_2)\bigr),
\end{equation}
with $\deg f=n$ and $k_1,k_2\in\mathbb{R}$, and proved via first-order averaging that $\lfloor n/2\rfloor+1$ is the exact maximum number of limit cycles bifurcating from the linear center. Diab, Guirao and Vera~\cite{DiabGuiraoVera2022} later made the \emph{smooth} term bivariate, $f(x,y)$, while keeping the discontinuous term $\phi_w(y)(k_1x+k_2)$ linear in $x$ as in~\cite{MartinsMereu2014}, and obtained the same bound; their averaging integral still annihilates every monomial with $i$ or $j$ odd, so the bivariate extension does not change which terms contribute. They also work with a piecewise-linear regularization $\phi_w(y)$ of $\sgn(y)$, recovering the discontinuous case only as $w\to0$, whereas the present paper applies the first-order averaging theorem for discontinuous systems directly, following~\cite{DeAbreu2024,MartinsMereu2014}. Abreu and Martins~\cite{DeAbreu2024} replaced the linear term $k_1x+k_2$ by a polynomial $g(x)$ of arbitrary degree $m\geq1$:

\begin{theorem}[Abreu--Martins~\cite{DeAbreu2024}]\label{thm:AbMar}
Let $f(x)$ and $g(x)$ be real polynomials of degrees $n\geq 1$ and $m\geq 1$ respectively. For $|\varepsilon|$ sufficiently small, the number $h_{m,n}=\lfloor n/2\rfloor+\lfloor m/2\rfloor+1$ is a lower bound for the maximum number of limit cycles of
\begin{equation}
  \dot{x}=y,\qquad
  \dot{y}=-x-\varepsilon\bigl(f(x)\,y+\sgn(y)\,g(x)\bigr)
\end{equation}
bifurcating from the periodic orbits of the linear center. Moreover, this bound is achieved for suitable choices of $f$ and $g$.
\end{theorem}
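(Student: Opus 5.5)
The plan is to apply the first-order averaging theorem for discontinuous systems in polar coordinates. Write $x=r\cos\theta$, $y=r\sin\theta$, so that the unperturbed flow is a rigid rotation. Taking $\theta$ as the new independent variable gives
\[
\frac{dr}{d\theta}=\varepsilon\,\sin\theta\bigl(f(r\cos\theta)\,r\sin\theta+\sgn(r\sin\theta)\,g(r\cos\theta)\bigr)+O(\varepsilon^2).
\]
Here the switching set is $\{\theta=0\}\cup\{\theta=\pi\}$, which is transversal to the rotation. The hypotheses of the discontinuous averaging theorem (as used in \cite{MartinsMereu2014,DeAbreu2024}) therefore hold, and simple zeros $r_0>0$ of the averaged function
\[
F(r)=\frac{1}{2\pi}\int_0^{2\pi}\sin\theta\bigl(f(r\cos\theta)r\sin\theta+\sgn(\sin\theta)g(r\cos\theta)\bigr)\,d\theta
\]
give limit cycles bifurcating from the circle of radius $r_0$.

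Next I would compute $F$ monomial by monomial, with $f(x)=\sum_{i=0}^n a_ix^i$ and $g(x)=\sum_{j=0}^m b_jx^j$.

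For the smooth part, $\int_0^{2\pi}\cos^i\theta\sin^2\theta\,d\theta$ vanishes for $i$ odd. For $i=2k$ it equals a positive constant $\alpha_k$. This contributes $\sum_{k=0}^{\lfloor n/2\rfloor}\alpha_k a_{2k}r^{2k+1}$.

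For the discontinuous part, $\int_0^{2\pi}|\sin\theta|\cos^j\theta\,d\theta=2\int_0^\pi\sin\theta\cos^j\theta\,d\theta=2\cdot\frac{1-(-1)^{j+1}}{j+1}$. This vanishes for $j$ odd and equals $4/(j+1)$ for $j=2l$. It contributes $\sum_{l=0}^{\lfloor m/2\rfloor}\beta_l b_{2l}r^{2l}$ with $\beta_l>0$.

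Hence $F(r)=P(r)$ is a polynomial in $r$ containing the odd powers $r,r^3,\dots,r^{2\lfloor n/2\rfloor+1}$ and the even powers $1,r^2,\dots,r^{2\lfloor m/2\rfloor}$. Each coefficient is an independent, arbitrary real multiple of the corresponding $a_{2k}$ or $b_{2l}$.

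The final step is counting and realizing zeros. $P$ has $N=\lfloor n/2\rfloor+\lfloor m/2\rfloor+2$ monomials with freely choosable coefficients. Descartes' rule of signs bounds its positive zeros by $N-1=h_{m,n}$, so $h_{m,n}$ is the best count this first-order method can give. To show the bound is attained, I would choose $h_{m,n}$ distinct positive numbers $0<r_1<\dots<r_{h}$ and expand $\prod_{s}(r-r_s)$. Its degree, $h_{m,n}$, has to be matched against the available powers, and that requires care: the available exponents are $\{0,2,\dots,2\lfloor m/2\rfloor\}\cup\{1,3,\dots,2\lfloor n/2\rfloor+1\}$, which is not a full consecutive range when $n$ and $m$ differ substantially.

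This is the main obstacle. My first attempt would treat the monomials as a Chebyshev (Descartes) system on $(0,\infty)$. Any family of distinct real powers $r^{e_1},\dots,r^{e_N}$ is an extended Chebyshev system on $(0,\infty)$. Consequently, for any prescribed $N-1$ positive points one can solve the linear system $P(r_s)=0$ for a nontrivial coefficient vector, and the generalized Vandermonde structure forces these zeros to be exactly the prescribed ones, each simple. Reading off $a_{2k}$ and $b_{2l}$ from the coefficients, taking the remaining coefficients of $f$ and $g$ to be zero, then yields $h_{m,n}$ simple positive zeros. The averaging theorem converts these into $h_{m,n}$ limit cycles for $|\varepsilon|$ small, which proves both the lower bound and that it is achieved.
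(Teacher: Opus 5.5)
The paper does not prove this statement itself (it is quoted from \cite{DeAbreu2024}), but your argument is correct and is the same template the paper uses for Theorem~\ref{thm:main}: your discontinuous contribution $4b_{2l}r^{2l}/(2l+1)$ agrees with $J_{\mathrm{AM}}$ in Remark~\ref{rem:no_r0}, and your Chebyshev-system realization step is precisely Lemma~\ref{lem:sparse} combined with Proposition~\ref{prop:achieve}. One detail needs adjusting, namely the degrees of $f$ and $g$: when $n$ (resp.\ $m$) is odd, the leading coefficient $a_n$ (resp.\ $b_m$) does not enter the averaged function, so give it any nonzero value rather than setting it to zero, whereas for even $n$ (resp.\ $m$) it is automatically nonzero, because a nontrivial combination of $N$ distinct powers with $N-1$ positive zeros must have all $N$ coefficients nonzero.
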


In all of the above works, the discontinuity occurs on the horizontal axis $\Sigma=\{y=0\}$. This paper asks what changes when the discontinuity is placed instead on the vertical axis $\Sigma=\{x=0\}$, using the first-order averaging theorem for discontinuous systems of Llibre, Mereu and Novaes~\cite{LlibreMereuNovaes2015}. The switching points then shift from $\theta=0,\pi$ to $\theta=\pi/2,3\pi/2$, and this shift has a direct consequence: for \emph{any} univariate perturbation $g(x)$, the corresponding contribution to the first-order averaged function vanishes identically, regardless of the degree of $g$. Recovering a nontrivial averaged function therefore requires $g$ to depend on $y$ and $x$. We study the system
\begin{equation}
\dot{x}=y, \quad \dot{y}=-x-\varepsilon\bigl(f(x)y+\text{sgn}(x)g(x,y)\bigr), \label{eq:main_sys}
\end{equation}
where $f$ has degree $n\geq1$ and $g$ is a bivariate polynomial of degree $m\geq2$; the degree hypothesis $m\geq2$ is sharp, since it is the least degree at which a monomial $x^iy^j$ can have both indices odd, and a symmetry argument shows that for $m=1$ none of the monomials $1, x,$ and $y$
occurring in g contributes to $F_0$, collapsing the system to the smooth bound $H_{1,n}=\lfloor n/2\rfloor$; this contrasts with the horizontal case (Theorem~\ref{thm:AbMar}), where $m=1$ already contributes. Our main result is the following.

\begin{maintheorem}[Main Result]\label{thm:main}
Let $f(x)=\sum_{i=0}^n a_ix^i$ be a real polynomial of degree $n\geq 1$ and let $g(x,y)=\sum_{i+j\leq m}b_{ij}x^iy^j$ be a real bivariate polynomial of degree $m\geq 2$. Consider system~\eqref{eq:main_sys}. For $|\varepsilon|$ sufficiently small, the number $H_{m,n}=\Bigl\lfloor\frac{n}{2}\Bigr\rfloor+\Bigl\lfloor\frac{m}{2}\Bigr\rfloor$ is a lower bound for the maximum number of limit cycles of~\eqref{eq:main_sys} bifurcating from the periodic orbits of the linear center $\dot{x}=y$, $\dot{y}=-x$. Moreover, there exist polynomials $f$ and $g$ such that exactly $H_{m,n}$ limit cycles are achieved.
\end{maintheorem}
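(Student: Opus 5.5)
I will pass to polar coordinates $x=r\cos\theta$, $y=r\sin\theta$ and use $\theta$ as the new time. This puts system~\eqref{eq:main_sys} in the standard form for the first-order averaging theorem for discontinuous systems of Llibre, Mereu and Novaes~\cite{LlibreMereuNovaes2015}. Since $\dot\theta=-1+O(\varepsilon)$, one obtains
\[
\frac{dr}{d\theta}=\varepsilon\,\sin\theta\bigl(f(r\cos\theta)\,r\sin\theta+\sgn(\cos\theta)\,g(r\cos\theta,r\sin\theta)\bigr)+O(\varepsilon^2).
\]
The switching manifold $\{x=0\}$ becomes $\theta=\pi/2,3\pi/2$. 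It is crossed transversally by the unperturbed flow away from the origin, so the hypotheses of that theorem hold on any compact set of $r>0$. The averaged function is
\[
F_0(r)=\int_0^{2\pi}\sin\theta\Bigl(r\sin\theta\,f(r\cos\theta)+\sgn(\cos\theta)\,g(r\cos\theta,r\sin\theta)\Bigr)d\theta .
\]
Simple positive zeros of $F_0$ give limit cycles for $|\varepsilon|$ small.

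The next step is to compute $F_0$ monomial by monomial.

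\emph{Smooth part.} The term $a_i r^{i+1}\int_0^{2\pi}\cos^i\theta\sin^2\theta\,d\theta$ is nonzero only for $i$ even. This gives $\sum_{k=0}^{\lfloor n/2\rfloor}\alpha_k a_{2k}r^{2k+1}$ with explicit Wallis-type constants $\alpha_k>0$.

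\emph{Discontinuous part.} The term $b_{ij}r^{i+j}\int_0^{2\pi}\sgn(\cos\theta)\cos^i\theta\sin^{j+1}\theta\,d\theta$ is controlled by symmetry.
\begin{itemize}
\item Under $\theta\mapsto-\theta$ the integrand is odd unless $j+1$ is even, so $j$ must be odd.
\item Under $\theta\mapsto\pi-\theta$ the factor $\sgn(\cos\theta)\cos^i\theta$ picks up $(-1)^{i+1}$, so $i$ must be odd.
\end{itemize}
For $i,j$ both odd the integral equals $4\int_0^{\pi/2}\cos^i\theta\sin^{j+1}\theta\,d\theta=:\beta_{ij}>0$. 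Writing $i+j=2\ell$ with $1\le \ell\le\lfloor m/2\rfloor$, the discontinuous part is $\sum_{\ell=1}^{\lfloor m/2\rfloor}\bigl(\sum_{i+j=2\ell,\ i,j\ \mathrm{odd}}\beta_{ij}b_{ij}\bigr)r^{2\ell}$. Denote the inner coefficient by $c_\ell$; it can be chosen freely, for instance through $b_{1,2\ell-1}$ alone. This also recovers the remark about $m=1$.

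Hence
\[
F_0(r)=\sum_{k=0}^{\lfloor n/2\rfloor}A_k r^{2k+1}+\sum_{\ell=1}^{\lfloor m/2\rfloor}c_\ell r^{2\ell}
=r\Bigl(A_0+\sum_{\ell=1}^{\lfloor m/2\rfloor}c_\ell r^{2\ell-1}+\sum_{k=1}^{\lfloor n/2\rfloor}A_k r^{2k}\Bigr).
\]
The bracket is a polynomial in $r$ with at most $1+\lfloor n/2\rfloor+\lfloor m/2\rfloor$ nonzero monomials. The coefficients $A_k\propto a_{2k}$ and $c_\ell$ are independent and arbitrary, and all the exponents are distinct: the odd exponents $2\ell-1$ never collide with the even exponents $2k$. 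By Descartes' rule of signs the bracket has at most $H_{m,n}$ positive roots.

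For the lower bound I need the converse: these coefficients can be chosen to realize exactly $H_{m,n}$ simple positive roots. I will take any prescribed distinct positive numbers $r_1<\dots<r_{H}$ and solve for a polynomial in the span of the available monomials that vanishes at them. This is a linear system of $H$ equations in $H+1$ unknowns, so a nonzero solution exists. Simplicity and the absence of further positive roots follow from Descartes: the span is a Chebyshev (Descartes) system on $(0,\infty)$, so no nonzero element has more than $H$ positive zeros counted with multiplicity. The averaging theorem then yields $H_{m,n}$ limit cycles. For the explicit example I would prescribe $r_s=s$ and read off $f$ and $g$ (with $g$ built from the monomials $xy^{2\ell-1}$).

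I expect the main obstacle to be the rigorous verification of the nonsmooth averaging hypotheses near $\Sigma$, together with the careful sign computation of the $\sgn(\cos\theta)$ integrals. The realization step is routine once the monomial structure is established.
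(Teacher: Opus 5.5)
Your proposal is correct and follows essentially the same route as the paper. The steps match: polar coordinates with $\theta$ as time; the Wallis computation for the smooth part; a parity argument showing that only monomials $x^iy^j$ with $i,j$ both odd survive in the discontinuous part (your reflection $\theta\mapsto\pi-\theta$ removes even $i$ directly, slightly more economically than the paper's detour through $\theta\mapsto\theta+\pi$); Descartes' rule applied to the $H_{m,n}+1$ disjoint exponents; and realization by solving an underdetermined homogeneous linear system, where simplicity and the absence of extra roots come from the same sparse root bound the paper establishes in Lemma~\ref{lem:sparse} and Proposition~\ref{prop:achieve}.
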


The bound $H_{m,n}$ is exactly one less than the Abreu--Martins bound $h_{m,n}$; as we will show in Remark~\ref{rem:no_r0}, this is because the vertical switching mechanism admits no contribution of order $r^0$, so the lowest power of $r$ appearing from $g$ is $r^2$ rather than $r^0$. The proof bounds the number of positive roots of $F_0$ via Descartes' rule of signs~\cite{Grabiner1999} and shows the bound is attained using an elementary root-realization argument.

\section{Preliminaries}

We use the framework of Filippov for piecewise smooth systems, together with the first-order averaging theorem for discontinuous systems of Llibre, Mereu and Novaes~\cite{LlibreMereuNovaes2015}.

Let $U \subset \mathbb{R}^2$ be an open neighbourhood of $0$. Since every embedded hypersurface is, locally, the inverse image of a regular value, we let $\Sigma = h^{-1}(0) \cap U$, where $h$ is the \emph{germ} at $0$ of a $C^r$ function, $r \geq 1$, having $0$ as a regular value and two such functions are identified whenever they agree on a common neighbourhood of $0$. 

Note that the hypersurface splits $U$ into the following open sets: 
\[\Sigma^+=\{p\in U:h(p)>0\} \quad \Sigma^-=\{p\in U:h(p)<0\}.\]

A piecewise smooth vector field $Z=(X,Y)$ is given by a smooth $X$ on $\overline{\Sigma^+}$ and a smooth $Y$ on $\overline{\Sigma^-}$. For $p\in\Sigma$, the Lie derivatives $Xh(p)=X(p)\cdot\nabla h(p)$ and $Yh(p)=Y(p)\cdot\nabla h(p)$ measure whether the flows of $X$ and $Y$ point toward $\Sigma^+$ or $\Sigma^-$ at $p$, and together they determine the local behaviour of the piecewise vector field near $p$. The point $p$ belongs to the \emph{crossing region}
\[
  \Sigma_c=\{p\in\Sigma:Xh(p)\cdot Yh(p)>0\},
\]
the \emph{sliding region} $\Sigma_s=\{p\in\Sigma:Xh(p)<0,\,Yh(p)>0\}$, or the \emph{escaping region} $\Sigma_e=\{p\in\Sigma:Xh(p)>0,\,Yh(p)<0\}$. If $p\in\Sigma_c$, the trajectory through $p$ is obtained by matching the trajectories of $X$ and $Y$; if $p\in\Sigma_s\cup\Sigma_e$, one instead resorts to the Filippov convention. We assume tangency points ($Xh(p)=0$ or $Yh(p)=0$) are isolated in $\Sigma$. Figure~\ref{fig:crossing-sliding-escaping} illustrates the three cases.

\begin{figure}[h]
\centering
\begin{subfigure}[b]{0.3\textwidth}
\centering
\begin{tikzpicture}[scale=1.0, >=Stealth, line width=0.8pt]
  \draw[red, thick] plot[smooth] coordinates
    {(0.3,3.0) (0.8,2.75) (1.3,2.05) (1.9,1.6) (2.4,1.43) (3.0,1.38) (3.5,1.38) (4.0,1.33) (4.55,1.13) (5.1,0.7) (5.5,0.2)};
  \node[red] at (5.85,0.45) {$\Sigma$};
  \foreach \x/\y in {2.4/1.42, 3.05/1.38, 3.7/1.34, 4.4/1.18} {
    \draw[->] ($(\x,\y)+(0.29,-0.79)$) -- (\x,\y);
    \draw[->] (\x,\y) -- ($(\x,\y)+(0.58,0.60)$);
  }
\end{tikzpicture}

\caption{Crossing region}
\end{subfigure}
\hfill
\begin{subfigure}[b]{0.3\textwidth}
\centering
\begin{tikzpicture}[scale=1.0, >=Stealth, line width=0.8pt]
  \draw[red, thick] plot[smooth] coordinates
    {(0.3,3.0) (0.8,2.75) (1.3,2.05) (1.9,1.6) (2.4,1.43) (3.0,1.38) (3.5,1.38) (4.0,1.33) (4.55,1.13) (5.1,0.7) (5.5,0.2)};
  \node[red] at (5.85,0.45) {$\Sigma$};
  \foreach \x/\y in {2.4/1.42, 3.05/1.38, 3.7/1.34, 4.4/1.18} {
    \draw[->] ($(\x,\y)+(-0.58,-0.60)$) -- (\x,\y);
    \draw[->] ($(\x,\y)+(-0.29,0.79)$) -- (\x,\y);
  }
\end{tikzpicture}
\caption{Sliding region}
\end{subfigure}
\hfill
\begin{subfigure}[b]{0.3\textwidth}
\centering
\begin{tikzpicture}[scale=1.0, >=Stealth, line width=0.8pt]
  \draw[red, thick] plot[smooth] coordinates
    {(0.3,3.0) (0.8,2.75) (1.3,2.05) (1.9,1.6) (2.4,1.43) (3.0,1.38) (3.5,1.38) (4.0,1.33) (4.55,1.13) (5.1,0.7) (5.5,0.2)};
  \node[red] at (5.85,0.45) {$\Sigma$};
  \foreach \x/\y in {2.4/1.42, 3.05/1.38, 3.7/1.34, 4.4/1.18} {
    \draw[->] (\x,\y) -- ($(\x,\y)+(0.29,-0.79)$);
    \draw[->] (\x,\y) -- ($(\x,\y)+(0.58,0.60)$);
  }
\end{tikzpicture}
\caption{Escaping region}
\end{subfigure}

\caption{Crossing, sliding, and escaping regions.}
\label{fig:crossing-sliding-escaping}
\end{figure}

\begin{lemma}\label{lem:crossing}
For system~\eqref{eq:main_sys}, $\Sigma\setminus\{0\}\subset\Sigma_c$.
\end{lemma}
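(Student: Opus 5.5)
Take $h(x,y)=x$, so that $\Sigma=\{x=0\}$, $\Sigma^+=\{x>0\}$ and $\Sigma^-=\{x<0\}$, with $\nabla h=(1,0)$. On $\overline{\Sigma^+}$ the system~\eqref{eq:main_sys} is given by the smooth field
\[
X(x,y)=\bigl(y,\,-x-\varepsilon(f(x)y+g(x,y))\bigr),
\]
and on $\overline{\Sigma^-}$ by
\[
Y(x,y)=\bigl(y,\,-x-\varepsilon(f(x)y-g(x,y))\bigr).
\]
Both are polynomial, hence smooth, on the closures of their half-planes.

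The key observation is that the discontinuity enters only through the second component, while $\nabla h=(1,0)$ picks out the first component. For $p=(0,y)\in\Sigma$ this gives
\[
Xh(p)=X(p)\cdot\nabla h(p)=y,\qquad Yh(p)=Y(p)\cdot\nabla h(p)=y.
\]
Hence $Xh(p)\cdot Yh(p)=y^2$, which is strictly positive whenever $y\neq0$, that is, whenever $p\neq 0$. By definition this places every $p\in\Sigma\setminus\{0\}$ in $\Sigma_c$.

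For consistency with the standing assumption, note that the origin is the only point where $Xh$ or $Yh$ vanishes, so the tangency set is $\{0\}$, which is isolated in $\Sigma$. The argument holds for every value of $\varepsilon$, not only small ones, and involves no real obstacle. The only point needing care is to keep the sign convention of $\operatorname{sgn}(x)$ on each side, and this is harmless here because that term never reaches the first component.
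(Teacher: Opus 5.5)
Your proof is correct and matches the paper's argument: with $h(x,y)=x$ and $\nabla h=(1,0)$, both Lie derivatives at $(0,y_0)$ equal $y_0$, so their product $y_0^2$ is positive off the origin. Writing out $X$ and $Y$ explicitly and noting that the only tangency point is the origin are harmless additions.
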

\begin{proof}
Here $h(x,y)=x$, so $\nabla h=(1,0)$ and, at $p=(0,y_0)\in\Sigma$, $Xh(p)=Yh(p)=y_0$. Hence $Xh(p)\cdot Yh(p)=y_0^2>0$ for $y_0\neq0$, so $p\in\Sigma_c$.
\end{proof}

By Lemma~\ref{lem:crossing}, the Filippov convention on $\Sigma_s\cup\Sigma_e$ is not needed for system~\eqref{eq:main_sys}. The $T$-periodic solutions produced by the averaging theorem below are, in the usual sense, limit cycles of \eqref{eq:main_sys} bifurcating from the periodic orbits of the linear center as $\varepsilon$ moves away from 0.

Consider a standard form system
\begin{equation}\label{eq:standard}
  \dot{x}(t)=\varepsilon F(t,x)+\varepsilon^2 R(t,x,\varepsilon),
\end{equation}
where
\[  F(t,x)=\begin{cases}F_1(t,x),&x\in S_1,\\ F_2(t,x),&x\in S_2,\end{cases}
  \qquad
  R(t,x,\varepsilon)=\begin{cases}R_1(t,x,\varepsilon),&x\in S_1,\\
  R_2(t,x,\varepsilon),&x\in S_2,\end{cases}\]
$D\subset\mathbb{R}$ is an open bounded set, $S_1=D\cap\overline{\Sigma^+}$ and $S_2=D\cap\overline{\Sigma^-}$, and $F_{1,2},R_{1,2}$ are $T$-periodic in $t$. The \emph{averaged function} is
\begin{equation}
  F_0(z)=\frac{1}{T}\int_0^T F(t,z)\,dt.
\end{equation}

\begin{theorem}[{\cite[Theorem~A]{LlibreMereuNovaes2015}}]\label{thm:averaging}
Suppose the following hold for system~\eqref{eq:standard}:
\begin{enumerate}[label=\textup{(H\arabic*)}]
\item\label{H1} There exists an open bounded set $C\subset D$ such that for each $z\in C$ the curve $\{(t,z):t\in\mathbb{R}/T\mathbb{Z}\}$ meets $\Sigma$ transversally and only at generic crossing points.
\item\label{H2} For $j=1,2$, the functions $F_j$ and $R_j$ are locally Lipschitz in $x$ and $T$-periodic in $t$; the boundaries of $S_j$ are piecewise $C^1$ embedded hypersurfaces.
\item\label{H3} For $a\in C$ with $F_0(a)=0$ there is a neighbourhood $U\subset C$ of $a$ such that $F_0(z)\neq 0$ for $z\in U\setminus\{a\}$ and $d_B(F_0,U,0)\neq 0$, where $d_B$ denotes the Brouwer degree~\cite{BuicaLlibre2004}.
\end{enumerate}
Then for $|\varepsilon|>0$ sufficiently small there exists a $T$-periodic solution $x(t,\varepsilon)$ of~\eqref{eq:standard} with $x(0,\varepsilon)\to a$ as $\varepsilon\to 0$.
\end{theorem}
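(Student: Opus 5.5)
The statement just above is Theorem~\ref{thm:averaging}, quoted from~\cite{LlibreMereuNovaes2015}. If I had to prove it rather than cite it, I would show that the displacement map of~\eqref{eq:standard} over one period equals $\varepsilon T F_0$ up to a uniformly smaller error. I would then conclude with a Brouwer degree continuation argument. Note that the hypotheses only make sense if $x$ ranges over an open set $D\subset\mathbb{R}^d$ (the paper writes $\mathbb{R}$) and $\Sigma=h^{-1}(0)$ lies in that space; I read them that way.

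\textbf{Step 1: the solution map.} Let $x(t,z,\varepsilon)$ be the solution with $x(0)=z\in C$. For $\varepsilon=0$ it is the constant $z$. By \ref{H1}, the segment $t\mapsto z$, $t\in[0,T]$, meets $\Sigma$ only at finitely many crossing times $t_1(z)<\dots<t_k(z)$, and transversally. Transversality of the unperturbed segment is what lets the implicit function theorem act on $h\bigl(x(t,z,\varepsilon)\bigr)=0$. Combined with \ref{H2} (local Lipschitz continuity of $F_j,R_j$), it shows that for small $|\varepsilon|$ the perturbed solution also crosses $\Sigma$ exactly $k$ times. These crossing times $t_i(z,\varepsilon)$ depend continuously on $(z,\varepsilon)$ and satisfy $t_i(z,\varepsilon)-t_i(z)=O(\varepsilon)$. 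Since the solution is built by gluing smooth pieces at transversal crossings, it is unique and continuous in $(z,\varepsilon)$. Gronwall's inequality on each piece then gives $x(t,z,\varepsilon)=z+O(\varepsilon)$, uniformly for $t\in[0,T]$ and $z$ in compact subsets of $C$.

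\textbf{Step 2: the displacement.} Write
\[
\delta(z,\varepsilon)=x(T,z,\varepsilon)-z=\varepsilon\int_0^T F\bigl(s,x(s,z,\varepsilon)\bigr)\,ds+O(\varepsilon^2).
\]
Compare the integrand with $F(s,z)$. Outside the $O(\varepsilon)$-neighbourhoods of the crossing times, the points $x(s,z,\varepsilon)$ and $z$ lie in the same region $S_j$. There the Lipschitz property gives a difference of order $O(\varepsilon)$. Inside those short intervals the integrand is merely bounded, so they contribute $O(\varepsilon)$ to the integral as well. Hence
\[
\delta(z,\varepsilon)=\varepsilon\bigl(T F_0(z)+O(\varepsilon)\bigr),
\]
uniformly on a neighbourhood of $a$. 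A zero of $\delta(\cdot,\varepsilon)$ is exactly the initial condition of a $T$-periodic solution.

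\textbf{Step 3: degree argument.} Take $U$ from \ref{H3}, shrunk if necessary so that $\overline U\subset C$. Then $F_0\neq0$ on $\partial U$, so $|F_0|\ge c>0$ there. Set $G_\varepsilon=\delta(\cdot,\varepsilon)/(\varepsilon T)=F_0+O(\varepsilon)$, which is continuous in $z$ by Step~1. For small $|\varepsilon|$, the straight-line homotopy between $F_0$ and $G_\varepsilon$ never vanishes on $\partial U$. Homotopy invariance therefore gives $d_B(G_\varepsilon,U,0)=d_B(F_0,U,0)\neq0$, so $G_\varepsilon$ has a zero $z_\varepsilon\in U$. Repeating this with ever smaller neighbourhoods of $a$ forces $z_\varepsilon\to a$.

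\textbf{Main obstacle.} I expect the hardest step to be the uniform estimate in Step~2 across the discontinuity. Everything hinges on bounding the time the perturbed orbit spends on the ``wrong'' side of $\Sigma$ by $O(\varepsilon)$, and that is precisely where transversality in \ref{H1} is used. Continuity of $\delta$ in $z$, which is needed for the Brouwer degree to be defined, rests on the same transversal-crossing argument. I would handle both with the implicit function theorem applied to $h\bigl(x(t,z,\varepsilon)\bigr)=0$ near each $t_i(z)$.
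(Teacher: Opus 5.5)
The paper gives no proof of this statement; it quotes Theorem~A of Llibre--Mereu--Novaes. Your plan is the standard argument from that source. Crossing times move by $O(\varepsilon)$, so the perturbed and unperturbed curves lie on opposite sides of $\Sigma$ only on a set of times of measure $O(\varepsilon)$. Hence the displacement is $\varepsilon T F_0+O(\varepsilon^2)$, and homotopy invariance of the Brouwer degree finishes. However, Step~1 fails as you set it up. You place $\Sigma=h^{-1}(0)$ inside the phase space $D$ and apply the implicit function theorem to $h\bigl(x(t,z,\varepsilon)\bigr)=0$. The unperturbed solution is the constant $x(t,z,0)=z$, so $t\mapsto h\bigl(x(t,z,0)\bigr)=h(z)$ is constant. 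The curve $\{(t,z)\}$ therefore either lies entirely in $\mathbb{R}\times\Sigma$ (if $z\in\Sigma$) or never meets it. So there are no transversal crossing times $t_i(z)$, and since $\tfrac{d}{dt}h\bigl(x(t,z,0)\bigr)\equiv 0$, the implicit function theorem cannot solve for $t$. Under your reading, (H1) just forces $C\cap\Sigma=\emptyset$, the discontinuity plays no role, and the result reduces to the Lipschitz averaging theorem. Your reading would not even cover the paper's own application, where the switching factor $\sgn(\cos\theta)$ depends only on the time variable $\theta$, not on $r$.

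The discontinuity set must live in the extended phase space. Take $h=h(t,x)$, $T$-periodic in $t$, with $\Sigma=h^{-1}(0)\subset\mathbb{R}/T\mathbb{Z}\times D$; in the application, $h(\theta,r)=\cos\theta$. Then (H1) means $\partial_t h(t_i(z),z)\neq 0$ at each zero $t_i(z)$ of $t\mapsto h(t,z)$. Work with $\Phi(t;z,\varepsilon)=h\bigl(t,x(t,z,\varepsilon)\bigr)$.

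\begin{itemize}
\item Since $h(t_i(z),z)=0$ and $x=z+O(\varepsilon)$, one has $|\Phi(t_i(z);z,\varepsilon)|=O(\varepsilon)$.
\item The one-sided $t$-derivatives $\partial_t h+\varepsilon\nabla_x h\cdot(F_j+\varepsilon R_j)$ stay bounded away from zero near $t_i(z)$. So $\Phi$ is strictly monotone there and has a unique zero $t_i(z,\varepsilon)$, with $|t_i(z,\varepsilon)-t_i(z)|=O(\varepsilon)$, continuous in $(z,\varepsilon)$.
\item Use this monotonicity argument rather than the classical implicit function theorem, because $x$ is only Lipschitz in $z$ and $\dot x$ jumps at the crossing.
\item Away from these intervals, $|h(t,z)|$ is bounded below uniformly for $z$ near $a$, which rules out extra crossings.
\end{itemize}

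With this correction, your Steps~2 and~3 go through as written.
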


By Theorem~\ref{thm:averaging} and Lemma~\ref{lem:crossing}, hypothesis~\ref{H1} holds automatically for system~\eqref{eq:main_sys}; finding limit cycles bifurcating from the linear center reduces to locating the positive zeros of $F_0$, computed in Section~\ref{sec:proof}. We record two tools used there: Descartes' rule of signs, to bound the number of positive roots of $F_0$, and an elementary lemma based on Rolle's theorem, to show this bound is attained with simple roots.

\begin{theorem}[Descartes's Rule of Signs~\cite{Grabiner1999}]\label{thm:descartes}
Let $p(x)=c_{i_1}x^{i_1}+\cdots+c_{i_r}x^{i_r}$ with $0\leq i_1<\cdots<i_r$ and all $c_{i_k}\neq 0$. The number of positive real roots of $p$ (counted with multiplicity) does not exceed the number of sign changes in the sequence $c_{i_1},\ldots,c_{i_r}$. Moreover, by choosing the coefficients appropriately one can achieve exactly $r-1$ positive real roots.
\end{theorem}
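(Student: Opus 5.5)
We first reduce to the case $i_1=0$: dividing $p$ by $x^{i_1}$ changes neither its positive roots nor its coefficient sequence. Write $Z(p)$ for the number of positive roots of $p$ counted with multiplicity and $V(p)$ for the number of sign changes in $c_{i_1},\dots,c_{i_r}$. We prove the inequality $Z(p)\le V(p)$ by induction on the number $r$ of nonzero terms. The base case $r=1$ is trivial, since $c\,x^{i}$ has no positive roots.

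The first ingredient is a parity statement: $Z(p)\equiv V(p)\pmod 2$. As $x\to0^+$, $p$ has the sign of $c_{i_1}$, and as $x\to+\infty$ it has the sign of $c_{i_r}$. The function $p$ changes sign exactly at the roots of odd multiplicity, so $Z(p)$ is even precisely when these two signs agree. The same is true of $V(p)$, because the parity of the number of sign changes in a sequence is determined by its first and last entries.

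The second ingredient combines differentiation with Rolle's theorem. With $i_1=0$, the derivative $p'$ has the $r-1$ nonzero coefficients $i_kc_{i_k}$ for $k\ge2$. These carry the same signs as $c_{i_2},\dots,c_{i_r}$, so $V(p')=V(p)$ or $V(p')=V(p)-1$; the second case occurs when $c_{i_1}$ and $c_{i_2}$ differ in sign. Rolle's theorem, with multiplicities handled in the usual way (a root of multiplicity $\mu$ of $p$ is a root of multiplicity $\mu-1$ of $p'$), gives $Z(p)\le Z(p')+1$. The induction hypothesis gives $Z(p')\le V(p')$.

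We now split into the two cases. If $V(p')=V(p)-1$, then $Z(p)\le V(p)$ immediately. If $V(p')=V(p)$, we only get $Z(p)\le V(p)+1$, but the parity statement excludes $Z(p)=V(p)+1$. This is the one delicate point of the upper bound: Rolle alone loses one root, and parity is what recovers it.

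For the realization claim, we prescribe the roots $t=1,2,\dots,r-1$ and solve the linear system $\sum_{k}c_{i_k}t^{i_k}=0$. This is $r-1$ homogeneous equations in $r$ unknowns, so it has a nonzero solution $(c_{i_k})$. Let $p$ be the resulting polynomial and $s\le r$ the number of its nonzero coefficients. Then $p$ has at least $r-1$ positive roots, so the upper bound just proved gives $r-1\le Z(p)\le V(p)\le s-1$. Hence $s=r$: every coefficient is nonzero, the signs strictly alternate, and $p$ has exactly $r-1$ positive roots, each of them simple. This is exactly what is needed later to place $\lfloor n/2\rfloor+\lfloor m/2\rfloor$ simple zeros of $F_0$.
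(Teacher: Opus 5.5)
Your proof is correct and complete. The paper never proves this statement; it cites Grabiner. It does, however, prove a weaker version in Lemma~\ref{lem:sparse} and Proposition~\ref{prop:achieve}, using the same two devices you use. The first is the Rolle-plus-induction step $Z(p)\le Z(p')+1$ after dividing out $x^{i_1}$. The second is the homogeneous linear system of $r-1$ equations in $r$ unknowns for the realization.

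The difference is in what the induction carries. The paper's induction carries only the number of terms, so it gets at most $r-1$ positive roots. Yours carries the sign-change count $V$. When $c_{i_1}$ and $c_{i_2}$ have the same sign, Rolle loses one root, and your parity argument $Z(p)\equiv V(p)\pmod 2$ recovers it. That step is exactly what turns the term-count bound into Descartes' rule as stated. For the paper's application the weaker bound suffices, because the coefficients of $F_0$ are free parameters and only the number of monomials matters. Lemma~\ref{lem:sparse} also tolerates vanishing coefficients, which the hypothesis of this statement excludes.

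In the realization step you also get more than Proposition~\ref{prop:achieve} states explicitly. Your chain $r-1\le Z(p)\le V(p)\le s-1$ forces all $r$ coefficients to be nonzero, with strictly alternating signs. This is needed for the constructed polynomial to satisfy the hypothesis ``all $c_{i_k}\neq 0$'' of the statement, and the paper leaves it implicit.
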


The following bound depends only on the number of monomials present, not on their signs; it is implicit in Theorem~\ref{thm:descartes}, but we prove it directly since it is the form used with Proposition~\ref{prop:achieve} in Section~\ref{sec:proof}.

\begin{lemma}[Sparse root bound]\label{lem:sparse}
Let $0\leq s_1<s_2<\cdots<s_t$ be integers and let $c_1,\ldots,c_t\in\mathbb{R}$, not all zero. Then
\[  P(r)=\sum_{j=1}^t c_j\,r^{s_j}\] has at most $t-1$ positive real roots in $(0,\infty)$, counted with multiplicity.
\end{lemma}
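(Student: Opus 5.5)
The plan is induction on $t$ via Rolle's theorem, the classical argument behind Descartes' rule. First I remove the zero coefficients: if some $c_j=0$, drop that term, which only lowers $t$ and so only strengthens the claim. Thus I may assume all $c_j\neq 0$. For the base case $t=1$, $P(r)=c_1r^{s_1}$ with $c_1\neq0$ has no roots in $(0,\infty)$, so the bound $t-1=0$ holds.

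For the inductive step, assume the claim for polynomials with $t-1$ nonzero monomials. Divide by $r^{s_1}$ and set
\[
Q(r)=r^{-s_1}P(r)=c_1+\sum_{j=2}^t c_j r^{s_j-s_1}.
\]
On $(0,\infty)$ the positive roots of $Q$ coincide with those of $P$, and the multiplicities are the same, because $r^{-s_1}$ is smooth and nonvanishing there. Differentiating gives
\[
Q'(r)=\sum_{j=2}^t c_j (s_j-s_1) r^{s_j-s_1-1}.
\]
This has $t-1$ monomials with nonzero coefficients and distinct nonnegative exponents, so by induction $Q'$ has at most $t-2$ positive roots counted with multiplicity.

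Next I transfer the bound from $Q'$ to $Q$ with a multiplicity-aware Rolle argument. Let the distinct positive roots of $Q$ be $\rho_1<\dots<\rho_k$, with multiplicities $\mu_1,\dots,\mu_k$. Each $\rho_i$ is a root of $Q'$ of multiplicity $\mu_i-1$. Rolle's theorem also gives a root of $Q'$ strictly inside each interval $(\rho_i,\rho_{i+1})$, and these roots are distinct from the $\rho_i$. Counting both sources, $Q'$ has at least
\[
\sum_i(\mu_i-1)+(k-1)=\sum_i\mu_i-1
\]
positive roots with multiplicity. Therefore $\sum_i\mu_i-1\le t-2$, that is, $\sum_i\mu_i\le t-1$. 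If $Q$ has no positive roots the bound is trivial.

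The only delicate point is the multiplicity bookkeeping in this Rolle step: roots of $Q$ have to be counted as roots of $Q'$ with multiplicity reduced by one. This is also why I divide by $r^{s_1}$ before differentiating, so that the constant term disappears and exactly one monomial is lost. Everything else is routine.
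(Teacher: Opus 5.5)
Your proposal is correct and follows the paper's proof essentially step for step. Both reduce to all $c_j\neq 0$, induct on $t$, divide by $r^{s_1}$, apply the induction hypothesis to $Q'$ (which has $t-1$ nonzero monomials), and close with the same multiplicity-aware Rolle count $\sum_i(\mu_i-1)+(k-1)=\sum_i\mu_i-1\le t-2$.
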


\begin{proof}
It suffices to treat the case in which every $c_j\neq 0$: if some coefficients vanish, $P$ is a sparse polynomial of the same type with fewer terms, say $t'<t$, for which the bound $t'-1<t-1$ already gives the (a fortiori stronger) conclusion. So assume $c_1,\ldots,c_t$ are all nonzero, and argue by induction on $t\geq 1$.

\emph{Base case $t=1$.} Here $P(r)=c_1r^{s_1}$ with $c_1\neq 0$, so $P(r)\neq 0$ for every $r>0$: $P$ has $0=t-1$ positive roots.

\emph{Inductive step.} Let $t\geq 2$ and assume the statement holds for sparse polynomials with $t-1$ nonzero terms. Since $r^{s_1}>0$ for $r>0$, the function
\[  Q(r):=r^{-s_1}P(r)=c_1+\sum_{j=2}^t c_j\,r^{e_j},\qquad e_j:=s_j-s_1>0,\]
has exactly the same positive roots as $P$, with the same multiplicities.

We use the following standard consequence of Rolle's theorem. Let $\rho_1<\cdots<\rho_\ell$ be the distinct positive roots of $Q$, with multiplicities $\mu_1,\ldots,\mu_\ell$, so that $Q$ has $k:=\mu_1+\cdots+\mu_\ell$ positive roots counted with multiplicity. If $\mu_i\geq 2$, then $\rho_i$ is a root of $Q'$ of multiplicity $\mu_i-1$; this accounts for $\sum_{i=1}^\ell(\mu_i-1)=k-\ell$ roots of $Q'$ (with multiplicity). In addition, Rolle's theorem furnishes, for each pair of consecutive roots $\rho_i<\rho_{i+1}$, a root of $Q'$ in the open interval $(\rho_i,\rho_{i+1})$; these $\ell-1$ intervals are pairwise disjoint and disjoint from $\{\rho_1,\ldots,\rho_\ell\}$, so they contribute $\ell-1$ further, distinct roots of $Q'$. In total, $Q'$ has at least
\[  (k-\ell)+(\ell-1)=k-1\]
positive roots counted with multiplicity.

On the other hand,
\[  Q'(r)=\sum_{j=2}^t c_j e_j\,r^{e_j-1}\] is itself a sparse polynomial: it has exactly $t-1$ nonzero coefficients $c_je_j$ (nonzero since $c_j\neq 0$ and $e_j\geq 1$), with pairwise distinct nonnegative integer exponents $e_2-1<\cdots<e_t-1$. By the induction hypothesis, $Q'$ has at most $t-2$ positive roots counted with multiplicity.

Combining the two bounds, $k-1\leq t-2$, i.e.\ $k\leq t-1$. Thus $Q$, and hence $P$, has at most $t-1$ positive roots counted with multiplicity, completing the induction.
\end{proof}

\begin{proposition}\label{prop:achieve}
Let $0\leq s_1<\cdots<s_t$ be integers and let $r_1<\cdots<r_{t-1}$ be any $t-1$ pairwise distinct positive reals. Then there exist reals $c_1,\ldots,c_t$, not all zero, such that $P(r):=\sum_{j=1}^tc_jr^{s_j}$ vanishes at $r_1,\ldots,r_{t-1}$. Moreover, for \emph{any} such choice of coefficients, $r_1,\ldots,r_{t-1}$ are precisely the positive roots of $P$, and each of them is simple.
\end{proposition}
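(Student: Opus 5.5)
Existence is a homogeneous linear system; the key claims follow from Lemma~\ref{lem:sparse}. The conditions $P(r_i)=0$ for $i=1,\ldots,t-1$ form a linear system
\[ \sum_{j=1}^t c_j\, r_i^{s_j}=0,\qquad i=1,\ldots,t-1,\]
of $t-1$ equations in the $t$ unknowns $c_1,\ldots,c_t$. The coefficient matrix $M=(r_i^{s_j})$ is $(t-1)\times t$, so its kernel has dimension at least $1$. Any nonzero kernel vector gives coefficients, not all zero, with $P(r_i)=0$ for every $i$. This settles existence without any computation.

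For the second part, I fix any such nonzero $(c_1,\ldots,c_t)$. By construction $P$ vanishes at the $t-1$ distinct points $r_1,\ldots,r_{t-1}$. Lemma~\ref{lem:sparse} says that $P$ has at most $t-1$ positive roots counted with multiplicity. The listed points already use up this count, each with multiplicity at least one. So $P$ can have no other positive root, and each $r_i$ must have multiplicity exactly one. Hence $r_1,\ldots,r_{t-1}$ are precisely the positive roots of $P$, and all of them are simple.

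The argument has no real obstacle; the only point needing care is how the pieces combine. Lemma~\ref{lem:sparse} requires $c$ not all zero, which the kernel choice guarantees. It also applies even if some $c_j$ vanish, as its proof already covers, so we need not check which coefficients are nonzero. As a side remark, a generalized Vandermonde determinant argument shows the kernel is one-dimensional, since the $(t-1)\times(t-1)$ minors are nonzero by the same root bound. Thus $P$ is unique up to scaling, and this forces all $c_j\neq0$: if some $c_j=0$, $P$ would have $t-2$ terms and at most $t-2$ positive roots. This refinement is not needed for the statement, and I would mention it only as a remark.
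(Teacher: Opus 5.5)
Your proof is correct and matches the paper's argument: a nonzero kernel vector of the $(t-1)\times t$ system gives existence, and Lemma~\ref{lem:sparse} forces $r_1,\ldots,r_{t-1}$ to be all the positive roots, each simple. In your side remark there is a small slip: a single vanishing $c_j$ leaves at most $t-1$ terms (not $t-2$), which gives at most $t-2$ positive roots, so your conclusion there still holds.
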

\begin{proof}
The conditions $P(r_i)=0$ for $i=1,\ldots,t-1$ form a homogeneous linear system of $t-1$ equations in the $t$ unknowns $c_1,\ldots,c_t$; its solution space has dimension at least $t-(t-1)=1$, so a nonzero solution $(c_1,\ldots,c_t)$ exists.

Fix any such nonzero solution. Then $P$ vanishes at the $t-1$ pairwise distinct points $r_1,\ldots,r_{t-1}$, so $P$ has at least $t-1$ positive roots counted with multiplicity. By Lemma~\ref{lem:sparse}, $P$ has at most $t-1$ positive roots counted with multiplicity. The two bounds force equality: $P$ has exactly $t-1$ positive roots counted with multiplicity, and since $r_1,\ldots,r_{t-1}$ already account for $t-1$ of them (each with multiplicity at least one), each must occur with multiplicity exactly one, and $P$ has no positive root other than $r_1,\ldots,r_{t-1}$.
\end{proof}

\section{The parity obstruction for univariate perturbations}\label{sec:obstruction}

Before proving the main theorem we characterize which monomials of $g(x,y)$ contribute to the averaged function through the discontinuous term, and show as a consequence that no perturbation depending on $x$ alone can contribute, regardless of its degree.

In polar coordinates $x=r\cos\theta$, $y=r\sin\theta$, $r>0$, one has $\sgn(x)=\sgn(r\cos\theta)=\sgn(\cos\theta)$. The contribution of a monomial $x^iy^j$ to the averaged function (see Section~\ref{sec:proof}) is governed by
\begin{equation*}
M_{i,j}=\int_0^{2\pi}\sgn(\cos\theta)\,\sin\theta\,\cos^i\theta\,\sin^j\theta \,d\theta,\qquad i,j\geq 0.
\end{equation*}

\begin{lemma}\label{lem:Mij}
For $i,j\geq 0$. 
\begin{equation}
M_{i,j}=
\begin{cases}
0 & \text{if } i \text{ is even or } j \text{ is even},\\[6pt]
\displaystyle
4\int_0^{\pi/2}\sin^{j+1}\theta\cos^i\theta\,d\theta > 0
& \text{if both } i \text{ and } j \text{ are odd.}
\end{cases}
\end{equation}
\end{lemma}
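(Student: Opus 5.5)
We prove the lemma using the symmetries of the integrand under $\theta\mapsto\pi-\theta$ and $\theta\mapsto-\theta$ (mod $2\pi$), which act on $\sin\theta$ and $\cos\theta$ by simple sign changes. Write the integrand as
\[
\phi(\theta)=\sgn(\cos\theta)\cos^i\theta\,\sin^{j+1}\theta,
\]
and use $2\pi$-periodicity to integrate over $[-\pi,\pi]$ instead of $[0,2\pi]$.

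\emph{Case $j$ even.} Under $\theta\mapsto-\theta$, the factors $\cos\theta$ and $\sgn(\cos\theta)$ are unchanged, while $\sin^{j+1}\theta$ changes sign because $j+1$ is odd. So $\phi$ is odd on $[-\pi,\pi]$ and $M_{i,j}=0$.

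\emph{Case $i$ even.} Under $\theta\mapsto\pi-\theta$, $\sin\theta$ is unchanged and $\cos\theta$ changes sign. Then $\sgn(\cos\theta)\cos^i\theta$ acquires the factor $(-1)^{i+1}=-1$. Split $[-\pi,\pi]$ into $[-\pi/2,\pi/2]$ and its complement. The substitution $\theta\mapsto\pi-\theta$ maps $[\pi/2,3\pi/2]$ onto $[-\pi/2,\pi/2]$ and sends $\phi$ to $-\phi$. The two halves therefore cancel, and $M_{i,j}=0$. The boundary points $\cos\theta=0$ have measure zero, so the convention for $\sgn(0)$ is irrelevant.

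\emph{Case $i$ and $j$ both odd.} Now $j+1$ is even, so $\phi(-\theta)=\phi(\theta)$. Also $\sgn(\cos\theta)\cos^i\theta=|\cos\theta|^i$ is invariant under $\theta\mapsto\pi-\theta$. Hence $\phi$ is invariant under both reflections, and $[0,2\pi]$ splits into four quarter-intervals that each contribute the same integral as $[0,\pi/2]$. On $[0,\pi/2]$ we have $\sgn(\cos\theta)=1$, which gives
\[
M_{i,j}=4\int_0^{\pi/2}\sin^{j+1}\theta\cos^i\theta\,d\theta.
\]
This is strictly positive because the integrand is continuous, nonnegative, and positive on $(0,\pi/2)$. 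If desired, it can also be written as the Beta value $2B\bigl(\tfrac{j+2}{2},\tfrac{i+1}{2}\bigr)$.

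The main point requiring care is the case $i$ even with $j$ odd, where the $\theta\mapsto-\theta$ symmetry alone gives nothing. That case relies on the factor $\sgn(\cos\theta)$ contributing an extra sign under $\theta\mapsto\pi-\theta$. The remaining steps are routine changes of variables.
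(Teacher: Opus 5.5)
Your proof is correct and uses essentially the same symmetry argument as the paper. The difference is organizational. You let the two reflections $\theta\mapsto-\theta$ and $\theta\mapsto\pi-\theta$, which multiply the integrand by $(-1)^{j+1}$ and $(-1)^{i+1}$, dispose of the cases ``$j$ even'' and ``$i$ even'' directly. The paper instead first uses the half-turn $\theta\mapsto\theta+\pi$ (factor $(-1)^{i+j}$) for mixed parity, then $\theta\mapsto-\theta$ for both even, and in the odd case your identity $\sgn(\cos\theta)\cos^i\theta=|\cos\theta|^i$ replaces its two-step reduction.
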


\begin{proof}
Write $h_{i,j}(\theta)=\sgn(\cos\theta)\sin\theta\cos^i\theta\sin^j\theta =\sgn(\cos\theta)\sin^{j+1}\theta\cos^i\theta$.

First, we observe that, under the shift $\theta\mapsto\theta+\pi$:
\begin{align*}
  h_{i,j}(\theta+\pi)
  &=\sgn(\cos(\theta+\pi))\sin^{j+1}(\theta+\pi)\cos^i(\theta+\pi)\\
  &=\bigl(-\sgn(\cos\theta)\bigr)(-1)^{j+1}\sin^{j+1}\theta\cdot(-1)^i\cos^i\theta\\
  &=(-1)^{i+j+1+1}\,h_{i,j}(\theta)
   =(-1)^{i+j}\,h_{i,j}(\theta).
\end{align*}
Therefore
\begin{itemize}
\item If $i+j$ is \emph{odd}: $h_{i,j}(\theta+\pi)=-h_{i,j}(\theta)$, so
$h_{i,j}$ is $\pi$-antiperiodic.  Then
$\int_\pi^{2\pi}h_{i,j}\,d\theta=-\int_0^\pi h_{i,j}\,d\theta$,
 hence $M_{i,j}=\int_0^{2\pi}h_{i,j}\,d\theta=0$.
\item If $i+j$ is \emph{even}: $h_{i,j}(\theta+\pi)=h_{i,j}(\theta)$, so $h_{i,j}$ is $\pi$-periodic, and $M_{i,j}=2\int_0^\pi h_{i,j}(\theta)\,d\theta$.
\end{itemize}
Since $i+j$ odd if and only if $i$ and $j$ have different parities, we have established that \emph{if $i$ and $j$ have different parities, then $M_{i,j}=0$}.

\medskip
Now, we study the case $i$ and $j$ both even. Suppose $i=2p$, $j=2q$, so $i+j$ is even.  Now apply the substitution $\theta\mapsto -\theta$ to the integrand $h_{2p,2q}(\theta)$:
\begin{align*}
  h_{2p,2q}(-\theta)
  &=\sgn(\cos(-\theta))\sin^{2q+1}(-\theta)\cos^{2p}(-\theta)\\  &=\sgn(\cos\theta)\cdot(-1)^{2q+1}\sin^{2q+1}\theta\cdot\cos^{2p}\theta
   =-h_{2p,2q}(\theta).
\end{align*}
Hence $h_{2p,2q}$ is an odd function of $\theta$, and
$M_{2p,2q}=\int_{-\pi}^{\pi}h_{2p,2q}(\theta)\,d\theta=0$.

Finally, we study the case $i=2p+1$ and $j=2q+1$ both odd. Here $i+j=2(p+q+1)$ is even, so by Step~1,
$M_{i,j}=2\int_0^\pi h_{i,j}(\theta)\,d\theta$.
Apply $\theta\mapsto\pi-\theta$ to $\int_{\pi/2}^{\pi}h_{i,j}\,d\theta$:
\begin{align*}
\int_{\pi/2}^{\pi}h_{2p+1,2q+1}(\theta)\,d\theta
&=\int_0^{\pi/2}h_{2p+1,2q+1}(\pi-\phi)\,d\phi.
\end{align*}
Now $\cos(\pi-\phi)=-\cos\phi$ and $\sin(\pi-\phi)=\sin\phi$, so
\begin{align*}
h_{2p+1,2q+1}(\pi-\phi)
&=\sgn(\cos(\pi-\phi))\sin^{2q+2}(\pi-\phi)\cos^{2p+1}(\pi-\phi)\\
&=\bigl(-\sgn(\cos\phi)\bigr)\sin^{2q+2}\phi\cdot(-\cos\phi)^{2p+1}\\
&=(-1)\cdot\sin^{2q+2}\phi\cdot(-1)^{2p+1}\cos^{2p+1}\phi\cdot\sgn(\cos\phi)\\
&=(-1)(-1)^{2p+1}\,h_{2p+1,2q+1}(\phi)
=h_{2p+1,2q+1}(\phi).
\end{align*}
Therefore $\int_{\pi/2}^{\pi}h_{i,j}=\int_0^{\pi/2}h_{i,j}$, and
$$M_{2p+1,2q+1}=2\int_0^\pi h_{2p+1,2q+1}\,d\theta  
=4\int_0^{\pi/2}\sgn(\cos\theta)\sin^{2q+2}\theta\cos^{2p+1}\theta\,d\theta.$$
Since $\cos\theta>0$ on $(0,\pi/2)$, we have $\sgn(\cos\theta)=1$ there, so
\[
M_{2p+1,2q+1}=4\int_0^{\pi/2}\sin^{2q+2}\theta\cos^{2p+1}\theta\,d\theta>0,\]
the integral being strictly positive because the integrand is positive on
$(0,\pi/2)$.
\end{proof}

\begin{corollary}\label{cor:univariate}
If $g=g(x)$ depends only on $x$, its contribution to the first-order averaged function vanishes identically, regardless of the degree of $g$.
\end{corollary}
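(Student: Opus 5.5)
The plan is to reduce the corollary to Lemma~\ref{lem:Mij} by writing down how the discontinuous term enters the averaged function. In polar coordinates $x=r\cos\theta$, $y=r\sin\theta$, system~\eqref{eq:main_sys} becomes, to first order in $\varepsilon$,
\[
\dot r=-\varepsilon\sin\theta\bigl(f(r\cos\theta)\,r\sin\theta+\sgn(\cos\theta)\,g(r\cos\theta,r\sin\theta)\bigr),\qquad \dot\theta=-1+O(\varepsilon).
\]
Taking $\theta$ as the new independent variable puts the system in the standard form~\eqref{eq:standard} with $T=2\pi$. The part of $F_0(r)$ produced by $\sgn(x)g$ is then, up to the nonzero factor $\pm1/(2\pi)$, equal to
\[
\int_0^{2\pi}\sgn(\cos\theta)\sin\theta\, g(r\cos\theta,r\sin\theta)\,d\theta .
\]
By linearity, for $g=\sum b_{ij}x^iy^j$ this integral is
\[
\sum b_{ij}\,r^{i+j}M_{i,j}.
\]

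Now specialize to $g=g(x)=\sum_{i=0}^m b_{i0}x^i$. Every monomial then has $j=0$, which is even. Lemma~\ref{lem:Mij} gives $M_{i,0}=0$ for every $i\geq0$, so each term $b_{i0}r^iM_{i,0}$ vanishes. The contribution of $g$ is therefore identically zero in $r$, for any coefficients and any degree.

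For a self-contained check of $M_{i,0}=0$, one can use the substitution $\theta\mapsto-\theta$. Under it, $\sgn(\cos\theta)\cos^i\theta$ is even while $\sin\theta$ is odd, so the integrand is odd over the full period and integrates to zero. This is exactly the even-$j$ case of the lemma.

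The only point needing care is the reduction step: confirming that the $\varepsilon$-order term of $dr/d\theta$ is exactly the expression above, with the factor $\sin\theta$ coming from $\dot r=(x\dot x+y\dot y)/r$. The $O(\varepsilon)$ correction in $\dot\theta$ only affects $R$, not $F$. I will state this explicitly and defer the full averaged-function computation to Section~\ref{sec:proof}.
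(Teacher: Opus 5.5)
Your proposal is correct and matches the paper's proof: you expand $g(x)$ into monomials $x^iy^0$, observe that the second index $j=0$ is even, and conclude from Lemma~\ref{lem:Mij} that every $M_{i,0}$ vanishes. Your supplementary check via $\theta\mapsto-\theta$ is exactly the reflection argument from the lemma's proof, and it has the small advantage of handling all $i$ at once, without splitting into parity cases.
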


\begin{proof}
Writing $g(x)=\sum_j b_j x^j$, each monomial $x^j=x^jy^0$ has second index equal to $0$, which is even. By Lemma~\ref{lem:Mij}, $M_{j,0}=0$ for every $j\geq0$, so every term of $g(x)$ contributes zero.
\end{proof}

The obstruction in Corollary~\ref{cor:univariate} shows that the perturbation must involve $y$ to break the parity; the natural choice is a bivariate $g(x,y)$, for which only monomials with both indices odd survive, by Lemma~\ref{lem:Mij}.

\begin{remark}
For $i=2p+1$ and $j=2q+1$. The integral $M_{2p+1,2q+1}$ can be evaluated using the standard formulas for powers of trigonometric functions, for instance, formula 3.621(5) in \cite{MR2360010} provides that 
\begin{equation}
M_{2p+1,2q+1}
=4\int_0^{\pi/2}\sin^{2q+2}\theta\cos^{2p+1}\theta\,d\theta =\frac{2\Gamma\left(\frac{2q+3}{2}\right)\Gamma(p+1)}{\Gamma\left(\frac{2(p+q)+5}{2}\right)}
\end{equation}
In particular, $M_{2p+1,2q+1}>0$ forall $p,q\geq 0.$
\end{remark}
 
\section{Proof of Theorem~\texorpdfstring{\ref{thm:main}}{A}}\label{sec:proof}
We consider polar coordinates $x=r\cos\theta$, $y=r\sin\theta$, $r>0$, and let
\begin{equation}
a(r,\theta):=f(r\cos\theta)\cdot r\sin\theta+\sgn(\cos\theta)\,g(r\cos\theta,r\sin\theta).
\end{equation}
From~\eqref{eq:main_sys}, since $\dot x=y$ and
$\dot y=-x-\varepsilon(f(x)y+\sgn(x)g(x,y))$, we have $x\dot x+y\dot y=-\varepsilon\,y\bigl(f(x)y+\sgn(x)g(x,y)\bigr)$ and $x\dot y-y\dot x=-r^2-\varepsilon\,x\bigl(f(x)y+\sgn(x)g(x,y)\bigr)$, and therefore, \emph{exactly} in $\varepsilon$,
\[  \dot r=\frac{x\dot x+y\dot y}{r}=-\varepsilon\sin\theta\,a(r,\theta),
  \qquad
  \dot\theta=\frac{x\dot y-y\dot x}{r^2}=-1-\frac{\varepsilon}{r}\cos\theta\,a(r,\theta).\]
Taking $\theta$ as the new independent variable,
\[  \frac{dr}{d\theta}=\frac{\dot r}{\dot\theta}
  =\frac{-\varepsilon\sin\theta\,a(r,\theta)}{-1-\frac{\varepsilon}{r}\cos\theta\,a(r,\theta)}
  =:\varphi(\varepsilon). \]
Since $\varphi(0)=0$, the quotient rule gives
\[  \varphi'(0)=\left.\frac{-\sin\theta\,a\cdot\bigl(-1-\tfrac{\varepsilon}{r}\cos\theta\,a\bigr)+\varepsilon\sin\theta\,a\cdot\bigl(-\tfrac{1}{r}\cos\theta\,a\bigr)}{\bigl(-1-\tfrac{\varepsilon}{r}\cos\theta\,a\bigr)^2}\right|_{\varepsilon=0}
  =\sin\theta\,a(r,\theta),\]
so that, by Taylor's theorem,
\begin{equation}\label{eq:drdt}
\frac{dr}{d\theta}=\varphi(\varepsilon)=\varepsilon\,F(r,\theta)+O(\varepsilon^2),
\end{equation}
where
\begin{equation}
F(r,\theta)=\sin\theta\,a(r,\theta)
=f(r\cos\theta)\cdot r\sin^2\theta
+\sgn(\cos\theta)\cdot\sin\theta\cdot g(r\cos\theta,r\sin\theta).
\end{equation}
This is in the form~\eqref{eq:standard} with period $T=2\pi$.

\begin{remark}[Sign convention: from $\theta$-stability to $t$-stability]\label{rem:sign-flip}
Equation~\eqref{eq:drdt} expresses $dr/d\theta$, not $\dot r$, and the two differ by the factor $\dot\theta=-1+O(\varepsilon)$ computed at the start of Section~\ref{sec:proof}, since the linear center $\dot x=y,\dot y=-x$ rotates clockwise, $\theta$ \emph{decreases} as $t$ increases along every orbit. By the chain rule,
\begin{equation}\label{eq:rdot_vs_dtheta}
\dot r=\frac{dr}{d\theta}\,\dot\theta
=\bigl(\varepsilon F_0(r)+O(\varepsilon^2)\bigr)\bigl(-1+O(\varepsilon)\bigr)=-\varepsilon F_0(r)+O(\varepsilon^2).
\end{equation}
Consequently, at a simple positive root $r_k$ of $F_0$, linearizing~\eqref{eq:rdot_vs_dtheta} around $r_k$ gives $\dot r\approx-\varepsilon F_0'(r_k)(r-r_k)$, so for $\varepsilon>0$ small the corresponding limit cycle is asymptotically stable in real time $t$ \emph{iff $F_0'(r_k)>0$}, and unstable iff $F_0'(r_k)<0$, the opposite sign convention from the one that governs~\eqref{eq:drdt} directly. (For $\varepsilon<0$ the roles simply swap, since $-\varepsilon F_0'(r_k)$ changes sign with $\varepsilon$.)
\end{remark}

Now, we proceed to realize the  computations of $I(r)$ and $J(r)$. The averaged function is
\begin{equation}
F_0(r)=\frac{1}{2\pi}\int_0^{2\pi}F(r,\theta)\,d\theta
=\frac{1}{2\pi}\bigl(I(r)+J(r)\bigr),
\end{equation}
where
\begin{align}
I(r)&=\int_0^{2\pi}f(r\cos\theta)\cdot r\sin^2\theta\,d\theta,\\
J(r)&=\int_0^{2\pi}\sgn(\cos\theta)\cdot\sin\theta\cdot
g(r\cos\theta,r\sin\theta)\,d\theta.
\end{align}

Write $f(x)=\sum_{i=0}^n a_i x^i$. Then
\[  I(r)=\sum_{i=0}^n a_i r^{i+1}\int_0^{2\pi}\cos^i\theta\sin^2\theta\,d\theta.\]
Using the standard Wallis-type integral formulas (see, e.g.,~\cite{DeAbreu2024}), we obtain that
\begin{equation}
\int_0^{2\pi}\cos^{2k+1}\theta\sin^2\theta\,d\theta=0,\qquad  \int_0^{2\pi}\cos^{2k}\theta\sin^2\theta\,d\theta
=\frac{\pi(2k)!}{4^k(k!)^2}\cdot\frac{1}{k+1}=:\pi\alpha_k>0,
\end{equation}
for $k=0,1,2,\ldots$, where $\alpha_k=\dfrac{(2k)!}{4^k(k!)^2(k+1)}$. Therefore  
\begin{equation}\label{eq:I_result}
I(r)=\pi\sum_{k=0}^{\floor{n/2}}\alpha_k\,a_{2k}\,r^{2k+1}.
\end{equation}
$I(r)$ is a polynomial in $r$ with $\lfloor n/2\rfloor+1$ terms, all of \emph{odd} degree.

Write $g(x,y)=\sum_{i+j\leq m}b_{ij}x^iy^j$.  Substituting $x=r\cos\theta$, $y=r\sin\theta$:
\[  J(r)=\sum_{\substack{i+j\leq m\\i,j\geq 0}}b_{ij}\,r^{i+j}\,M_{i,j},\]
where $M_{i,j}=\int_0^{2\pi}\sgn(\cos\theta)\,\sin\theta\,\cos^i\theta\,\sin^j\theta \,d\theta,\ i,j\geq 0 $. By Lemma~\ref{lem:Mij}, only the terms with both $i$ and $j$ odd contribute.  Writing $i=2p+1$, $j=2q+1$, the exponent of $r$ is $i+j=2(p+q+1)$, which is always \emph{even}. Therefore
\begin{equation}\label{eq:J_result}
J(r)=\sum_{\substack{p,q\geq 0\\2(p+q+1)\leq m}}
b_{2p+1,\,2q+1}\,M_{2p+1,\,2q+1}\,r^{2(p+q+1)}.
\end{equation}
Setting $s=p+q+1\geq 1$, the exponents of $r$ that appear in $J(r)$ are $2,4,\ldots,2\lfloor m/2\rfloor$, giving $\lfloor m/2\rfloor$ distinct even powers.

\begin{remark}\label{rem:no_r0}
The absence of the constant term $r^0$ in $J(r)$ is due to the constraint $i,j\geq 1$, which forces $i+j\geq 2$.  This is the structural reason why $H_{m,n}=h_{m,n}-1$: in the Abreu--Martins setting, the term $b_0$ of $g(x)$ contributes $r^0$ to $F_0$ via $J_{\mathrm{AM}}(r)=\sum_{k=0}^{\lfloor
m/2\rfloor}\frac{4}{2k+1}b_{2k}r^{2k}$, which starts at $r^0$; here, no such constant term exists.
\end{remark}

Combining~\eqref{eq:I_result} and~\eqref{eq:J_result}, we obtain that
\begin{equation}\label{eq:F0_full}
F_0(r)=\frac{1}{2\pi}\left(
\pi\sum_{k=0}^{\floor{n/2}}\alpha_k a_{2k}r^{2k+1}
+\sum_{\substack{p,q\geq 0\\p+q\leq\floor{m/2}-1}}
b_{2p+1,2q+1}M_{2p+1,2q+1}r^{2(p+q+1)}
\right).
\end{equation}
The odd powers of $r$ in $F_0$ are $\{r^1,r^3,\ldots,r^{2\lfloor n/2\rfloor+1}\}$ (from $I$) and the even powers are $\{r^2,r^4,\ldots,r^{2\lfloor m/2\rfloor}\}$ (from $J$).  These two sets are \emph{disjoint}, so the total number of distinct monomials in $F_0$ is
\[ \underbrace{(\floor{n/2}+1)}_{\text{odd powers from }I} +\underbrace{\floor{m/2}}_{\text{even powers from }J} =\floor{n/2}+\floor{m/2}+1. \] By Theorem~\ref{thm:descartes}, $F_0(r)$ has at most \[ (\floor{n/2}+1+\floor{m/2})-1=\floor{n/2}+\floor{m/2}=H_{m,n}\] positive real roots (the maximum number of sign changes among $\lfloor n/2\rfloor+\lfloor m/2\rfloor+1$ terms is $\lfloor n/2\rfloor+\lfloor m/2\rfloor$).\medskip

We show that the coefficients $a_{2k}$ and $b_{2p+1,2q+1}$ can be chosen so that $F_0$ has exactly $H_{m,n}$ \emph{simple} positive roots. Let $S=\{s_1<s_2<\cdots<s_t\}$ denote the set of exponents occurring in~\eqref{eq:F0_full}, that is,
\[S=\{1,3,\ldots,2\lfloor n/2\rfloor+1\}\ \cup\ \{2,4,\ldots,2\lfloor m/2\rfloor\},
\qquad t=\lfloor n/2\rfloor+\lfloor m/2\rfloor+1=H_{m,n}+1, \]
as established in the previous subsection. Fix any $H_{m,n}=t-1$ pairwise distinct positive reals $r_1<r_2<\cdots<r_{H_{m,n}}$. By Proposition~\ref{prop:achieve}, applied to the exponents $s_1,\ldots,s_t$ and the nodes $r_1,\ldots,r_{H_{m,n}}$, there exist reals $c_1,\ldots,c_t$, not all zero, such that
\begin{equation}
  P(r)=\sum_{k=1}^{t}c_k\,r^{s_k}
\end{equation}
vanishes exactly at $r_1,\ldots,r_{H_{m,n}}$, each root being simple, and $P$ has no other positive root.

It remains to realize $P$ as $2\pi F_0$ for a suitable choice of the original parameters $a_i,b_{ij}$. By~\eqref{eq:F0_full}, each odd exponent $s_k\in\{1,3,\ldots,2\lfloor n/2\rfloor+1\}$ corresponds to exactly one free coefficient of $F_0$, namely $\pi\alpha_{(s_k-1)/2}\,a_{s_k-1}$, controlled by the single parameter $a_{s_k-1}$; we set $a_{s_k-1}=c_k/(\pi\alpha_{(s_k-1)/2})$. Each even exponent $s_k\in\{2,4,\ldots,2\lfloor m/2\rfloor\}$ corresponds to a coefficient that is a sum of contributions $b_{2p+1,2q+1}M_{2p+1,2q+1}$ over all pairs $(p,q)$ with $2(p+q+1)=s_k$; we choose one such pair, set $b_{2p+1,2q+1}=c_k/M_{2p+1,2q+1}$, and set every other coefficient $b_{2p'+1,2q'+1}$ with $p'+q'=p+q$ equal to zero. With these choices, $2\pi F_0(r)=P(r)$ identically, so $F_0$ has exactly the prescribed roots $r_1,\ldots,r_{H_{m,n}}$, each simple, and no other positive root.\medskip

Finally, we proceed with the verification of the hypotheses of Theorem~\ref{thm:averaging}. 
 
\begin{proposition}\label{prop:H1}
For $|\varepsilon|$ sufficiently small, hypothesis~\ref{H1} holds for system~\eqref{eq:main_sys}.
\end{proposition}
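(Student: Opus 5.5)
We have to show that (H1) holds for the standard-form system \eqref{eq:drdt} with independent variable $\theta\in\mathbb{R}/2\pi\mathbb{Z}$ and state variable $r$. In the $(\theta,r)$ cylinder the switching set is $\Sigma=\{\cos\theta=0\}=\{\theta=\pi/2\}\cup\{\theta=3\pi/2\}$, that is, $h(\theta,r)=\cos\theta$. Take $C=(r_0,R_0)$, an open bounded interval with $0<r_0<r_1$ and $R_0>r_{H_{m,n}}$, where the $r_k$ are the prescribed roots of $F_0$.

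\emph{Step 1 (transversality of the unperturbed curves).} For fixed $z\in C$, the curve $\{(\theta,z):\theta\in\mathbb{R}/2\pi\mathbb{Z}\}$ meets $\Sigma$ exactly at the two points $\theta=\pi/2$ and $\theta=3\pi/2$. The tangent vector of the curve is $(1,0)$ and $\nabla h=(-\sin\theta,0)$, so the Lie derivative at these points is $-\sin\theta=\mp1\neq0$. Hence the curve meets $\Sigma$ transversally, and at only finitely many points.

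\emph{Step 2 (generic crossing points).} We verify that both intersection points lie in the crossing region of the perturbed field $(1,\varepsilon F+\varepsilon^2R)$ in the $(\theta,r)$ variables. On each side, the derivative of $h$ along the field is $-\sin\theta\cdot 1+0=-\sin\theta$, because $h$ does not depend on $r$. This quantity is the same nonzero number from both sides, so the product of the one-sided Lie derivatives is $\sin^2\theta=1>0$. This holds for every $\varepsilon$; no smallness is needed at this stage.

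As a consistency check, the result matches Lemma~\ref{lem:crossing}. In original coordinates the point $(0,y_0)$ with $y_0=\pm r\neq0$ is a crossing point. Moreover, the time change $t\mapsto\theta$ is legitimate, since $\dot\theta=-1-\tfrac{\varepsilon}{r}\cos\theta\,a(r,\theta)$ is bounded away from $0$ for $r\in\overline C$ (where $a$ is bounded) and $|\varepsilon|$ small. This is where the restriction to small $|\varepsilon|$ and the use of a bounded $C$ away from $r=0$ enter. Tangency points therefore cannot occur on $\overline C$, and no sliding or escaping happens.

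The main point needing care is Step 2's reduction to the $\theta$-parametrization: the quotient defining $dr/d\theta$ must stay smooth on each side and bounded uniformly on $\overline C$. I would bound $|a(r,\theta)|\le K$ on $[0,2\pi]\times\overline C$ and choose $|\varepsilon|<r_0/(2K)$, so that $|\dot\theta+1|\le 1/2$. With that in place, (H1) follows directly.
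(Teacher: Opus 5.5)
Your proof is correct. It differs from the paper's mainly in where the check is carried out.

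\textbf{The paper's route.} It verifies hypothesis~\ref{H1} via Lemma~\ref{lem:crossing} in the original $(x,y)$-plane. Both one-sided fields give $Xh=Yh=y_0\neq 0$ on $\{x=0\}$. The paper transfers this to the unperturbed circles and takes $C=D\setminus\{0\}$.

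\textbf{Your route.} You work directly in the $(\theta,r)$ cylinder, where hypothesis~\ref{H1} is actually formulated, with $h=\cos\theta$. Since $h$ does not depend on $r$, both extended fields $(1,\varepsilon F_j+\varepsilon^2R_j)$ give the same derivative $-\sin\theta=\mp 1$ at $\theta=\pi/2,3\pi/2$. This yields transversality and the crossing condition at once, for every $\varepsilon$.

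\textbf{What each buys.} The paper's argument is shorter because it reuses the lemma. Yours is intrinsic to the standard form and makes explicit where smallness of $|\varepsilon|$ enters.

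In particular, your choice $C=(r_0,R_0)$ with $r_0>0$ is more careful than the paper's $C=D\setminus\{0\}$. The term $\tfrac{\varepsilon}{r}\cos\theta\,a(r,\theta)$ in $\dot\theta$ contains $\tfrac{\varepsilon}{r}|\cos\theta|\,g(r\cos\theta,r\sin\theta)$. When $g(0,0)=b_{00}\neq 0$, this term is unbounded as $r\to 0$. So near the origin the reparametrization by $\theta$, and with it the remainder $R$, is not uniformly controlled. Your bound $|\varepsilon|<r_0/(2K)$ guarantees $|\dot\theta+1|\leq 1/2$ on $\overline C$, which repairs this.
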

\begin{proof}
By Lemma~\ref{lem:crossing}, $\Sigma\setminus\{0\}\subset\Sigma_c$, so every unperturbed orbit (a circle centred at the origin) crosses $\Sigma$ transversally, at a generic crossing point, for every $r>0$. Taking $D$ to be any open bounded set containing the origin and $C=D\setminus\{0\}$ gives
hypothesis~\ref{H1}.
\end{proof}
 
\begin{proposition}\label{prop:H3}
The coefficients $a_i,b_{ij}$ can be chosen so that hypothesis~\ref{H3} holds at every positive root of $F_0$, with exactly $H_{m,n}$ such roots.
\end{proposition}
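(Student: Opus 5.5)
We take the coefficients $a_i,b_{ij}$ exactly as in the construction preceding Proposition~\ref{prop:H1}. Fix distinct nodes $0<r_1<\cdots<r_{H_{m,n}}$. Proposition~\ref{prop:achieve} then gives a nonzero coefficient vector $(c_1,\ldots,c_t)$ on the exponent set $S$. We set $a_{s_k-1}=c_k/(\pi\alpha_{(s_k-1)/2})$ for odd $s_k$. For even $s_k$ we put $c_k/M_{2p+1,2q+1}$ on one chosen pair $(p,q)$ and zero on the others. This gives $2\pi F_0=P$ identically. Since $F_0$ is a polynomial in $r$, it is real-analytic on $(0,\infty)$. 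By Proposition~\ref{prop:achieve}, its positive zeros are exactly $r_1,\ldots,r_{H_{m,n}}$, and each is simple.

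To check \ref{H3} at a root $a=r_k$, we use simplicity: $F_0'(r_k)\neq0$. By continuity of $F_0'$ there is an open interval $U=(r_k-\delta,r_k+\delta)$ with the following properties:
\begin{itemize}
\item $\delta<\min(r_k,\ \min_{j\neq k}|r_j-r_k|)$, so that $U\subset C=D\setminus\{0\}$ (taking $D$ large enough to contain all the $r_j$), and $U$ contains no other root;
\item $F_0'$ has constant sign on $U$.
\end{itemize}
Hence $F_0$ is strictly monotone on $U$ and $F_0(z)\neq0$ for $z\in U\setminus\{r_k\}$. For a $C^1$ function in one variable with a nondegenerate isolated zero, the Brouwer degree is the sign of the Jacobian, $d_B(F_0,U,0)=\sgn F_0'(r_k)=\pm1\neq0$. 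We can state this directly, since $F_0$ takes opposite signs at the two endpoints of $U$.

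The one point needing care is that the realization must be a genuine choice of a polynomial $f$ of degree exactly $n$ and a $g$ of degree exactly $m\geq2$. The coefficients not fixed above are the odd-index $a_{2k+1}$ and the $b_{ij}$ with $i$ or $j$ even. These do not enter $F_0$, by the parity formulas for $I$ and by Lemma~\ref{lem:Mij}. So we may assign them freely, for instance $a_n=1$ if $n$ is odd, and $b_{0m}=1$ or $b_{m0}=1$ when needed, to make the degrees exactly $n$ and $m$ without changing $F_0$. If $n$ is even, the leading coefficient $a_n$ corresponds to $c_t$ or some other $c_k$, which could vanish. In that case we perturb the nodes slightly, or note that generic nodes give all $c_k\neq0$ because $P$ must have $t-1$ sign changes by Theorem~\ref{thm:descartes}. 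We expect this degree-bookkeeping to be the only mild obstacle. The degree computation itself is routine.

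Combining these facts, \ref{H3} holds at each of the exactly $H_{m,n}$ positive roots of $F_0$, which proves the proposition.
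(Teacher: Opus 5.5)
Your proof is correct and takes the same approach as the paper: you realize $2\pi F_0=P$ via Proposition~\ref{prop:achieve}, so the positive roots are exactly the simple nodes $r_1,\ldots,r_{H_{m,n}}$, and at each one $d_B(F_0,U,0)=\sgn F_0'(r_k)\neq0$. Your check that $f$ and $g$ have exact degrees $n$ and $m$ is a worthwhile addition that the paper omits, and your Descartes argument (equivalently Lemma~\ref{lem:sparse}) in fact shows that every $c_k\neq0$ for \emph{any} choice of distinct nodes, so no perturbation or genericity is needed to get $a_n\neq0$ when $n$ is even.
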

 \begin{proof}
The functions $F(r,\theta)$ and $R(r,\theta,\varepsilon)$ in~\eqref{eq:drdt} are polynomials in $r$ on each half-plane $\{\cos\theta>0\}$ and $\{\cos\theta<0\}$, hence locally Lipschitz in $r$ and $2\pi$-periodic in
$\theta$; this is hypothesis~\ref{H2}.
 
By a previous construction, the coefficients $a_i,b_{ij}$ can be chosen so that $F_0$ has $r_1,\ldots,r_{H_{m,n}}$ as its only positive roots, each simple. At a simple root $r_\ell$, the Brouwer degree with respect to a sufficiently small neighbourhood $U$ of $r_\ell$ equals $d_B(F_0,U,0)=\operatorname{sign}(F_0'(r_\ell))\neq0$, since $F_0'(r_\ell)\neq0$. Hence hypothesis~\ref{H3} holds at every $a=r_\ell$, $\ell=1,\ldots,H_{m,n}$.
\end{proof}
 
By Propositions~\ref{prop:H1} and~\ref{prop:H3} and Theorem~\ref{thm:averaging}, we obtain that each simple root $r_\ell$ of $F_0$ produces, for $|\varepsilon|$ sufficiently small, a limit cycle of~\eqref{eq:main_sys} bifurcating from the periodic orbit of radius $r_\ell$ of the linear center. This completes the proof of Theorem~\ref{thm:main}.

\section{Example}

We exhibit a concrete system of the form~\eqref{eq:main_sys}, with $n=4$ and $m=4$, having exactly
\[
  H_{4,4}=\left\lfloor\frac{4}{2}\right\rfloor+\left\lfloor\frac{4}{2}\right\rfloor=4
\]
limit cycles; that is, one realizing the maximum number of limit cycles permitted by the bound of Theorem~\ref{thm:main} for this pair $(n,m)$.

Take $f(x)=a_0+a_2x^2+a_4x^4$ and $  g(x,y)=b_{11}xy+b_{13}xy^3+b_{31}x^3y,$ so that every monomial appearing in $f$ and $g$ has degree at most $m=4$, as required. By~\eqref{eq:I_result} and~\eqref{eq:J_result},
\begin{align*}
  I(r) &= \pi\left(\alpha_0a_0\,r+\alpha_1a_2\,r^3+\alpha_2a_4\,r^5\right)
        = \pi\left(a_0\,r+\frac{a_2}{4}r^3+\frac{a_4}{8}r^5\right), \\
  J(r) &= M_{1,1}b_{11}\,r^2+M_{1,3}b_{13}\,r^4+M_{3,1}b_{31}\,r^4,
\end{align*}
where $M_{1,1}=4/3$, $M_{1,3}=4/5$, and $M_{3,1}=8/15$ (recall that $M_{i,j}$ is not symmetric in $i,j$, so $M_{1,3}\neq M_{3,1}$ in general). Collecting the even powers of $r$ contributed by $J$,
\[
  [r^2]J=\frac{4}{3}b_{11}, \qquad [r^4]J=\frac{4}{5}b_{13}+\frac{8}{15}b_{31}.
\]
No term of order $r^6$ can appear, since any such term in $J$ would require $i+j=6$ for a monomial $x^iy^j$ in $g$, exceeding the degree bound $m=4$. Adding $I(r)$ and $J(r)$ then gives
\begin{equation}
  F_0(r)=\frac{1}{2\pi}\left(
    \pi a_0\,r
    +\frac{4}{3}b_{11}\,r^2
    +\frac{\pi}{4}a_2\,r^3
    +\left(\frac{4}{5}b_{13}+\frac{8}{15}b_{31}\right)r^4
    +\frac{\pi}{8}a_4\,r^5
  \right),
\end{equation}
a quintic with all five monomials of degree $1,\dots,5$ present. By Descartes' rule of signs, $F_0$ therefore has at most $4=H_{4,4}$ positive roots, the bound we aim to attain.

As in the sharp example of de Abreu and Martins~\cite{DeAbreu2024} for the horizontal-switching case, we prescribe the positive roots of $F_0$ at consecutive integers and recover the coefficients via the resulting Vandermonde-type linear system. We now choose the coefficients so that $F_0$ vanishes precisely at $r_1 = 1, r_2 = 2, r_3 = 3, r_4 = 4$. To reduce the number of free parameters to match the number of equations, set $b_{13}=0$, so that $[r^4]J=\frac{8}{15}b_{31}$, and abbreviate
\[
  A:=\pi a_0, \quad
  B:=\frac{4}{3}b_{11}, \quad
  C:=\frac{\pi}{4}a_2, \quad
  D:=\frac{8}{15}b_{31}, \quad
  E:=\frac{\pi}{8}a_4,
\]
so that $F_0(r)\propto Ar+Br^2+Cr^3+Dr^4+Er^5$. Imposing $F_0(r_\ell)=0$ for $\ell=1,2,3,4$ and dividing each equation by $r_\ell>0$ yields the homogeneous linear system
\begin{equation}
  \begin{pmatrix}
    1 & 1 & 1 & 1 & 1 \\
    1 & 2 & 4 & 8 & 16 \\
    1 & 3 & 9 & 27 & 81 \\
    1 & 4 & 16 & 64 & 256
  \end{pmatrix}
  \begin{pmatrix} A \\ B \\ C \\ D \\ E \end{pmatrix}
  = \mathbf{0}.
\end{equation}
This $4\times5$ system has a one-dimensional solution space, which we parametrize by setting $E=1$. Rather than inverting the resulting $4\times4$ Vandermonde system directly, its coefficient matrix has nodes $1,2,3,4$ and determinant $\prod_{1\le i<j\le4}(j-i)=12\neq0$, so a unique solution exists . It is quicker to note that the quintic $p(r):=Ar+Br^2+Cr^3+Dr^4+r^5$ vanishes at $r=0,1,2,3,4$ by construction, and a degree-$5$ polynomial is determined up to scalar by five roots. Hence $p(r)$ must coincide with $r(r-1)(r-2)(r-3)(r-4)=r^5-10r^4+35r^3-50r^2+24r,$ and reading off coefficients gives $A=24$, $B=-50$, $C=35$, $D=-10$, $E=1$ directly, without solving any linear system.

Undoing the substitutions above,
\begin{equation}
  a_0=\frac{A}{\pi}=\frac{24}{\pi}, \quad
  b_{11}=\frac{3B}{4}=-\frac{75}{2}, \quad
  a_2=\frac{4C}{\pi}=\frac{140}{\pi}, \quad
  b_{31}=\frac{15D}{8}=-\frac{75}{4}, \quad
  a_4=\frac{8E}{\pi}=\frac{8}{\pi},
\end{equation}
and system~\eqref{eq:main_sys} becomes
\begin{equation}\label{eq:example_system}
  \dot x=y, \qquad
  \dot y=-x-\varepsilon\left[
    \left(\frac{24}{\pi}+\frac{140}{\pi}x^2+\frac{8}{\pi}x^4\right)y
    +\sgn(x)\left(-\frac{75}{2}xy-\frac{75}{4}x^3y\right)
  \right].
\end{equation}
By construction, the averaged function of~\eqref{eq:example_system} satisfies $F_0(1)=F_0(2)=F_0(3)=F_0(4)=0$ with $F_0(r)\propto r(r-1)(r-2)(r-3)(r-4)$ (Figure~\ref{fig:F0}); since these four roots are simple, Theorem~\ref{thm:averaging} guarantees exactly $4$ limit cycles bifurcating from the origin for $|\varepsilon|$ sufficiently small.

\begin{remark}[Stability of the limit cycles]
Since $F_0(r)\propto r(r-1)(r-2)(r-3)(r-4)$, the product rule gives, at each root $r_k\in\{1,2,3,4\}$,
\[
  F_0'(r_k)\propto r_k\prod_{\ell\neq k}(r_k-r_\ell),
\]
where the factor $r_k>0$ does not affect the sign. Evaluating,
\begin{align*}
  F_0'(1)&\propto 1\cdot(-1)(-2)(-3)=-6<0, &&\text{unstable}, \\
  F_0'(2)&\propto 2\cdot(1)(-1)(-2)=+4>0, &&\text{stable}, \\
  F_0'(3)&\propto 3\cdot(2)(1)(-1)=-6<0, &&\text{unstable}, \\
  F_0'(4)&\propto 4\cdot(3)(2)(1)=+24>0, &&\text{stable}.
\end{align*}
By Remark~\ref{rem:sign-flip}, the four cycles therefore alternate unstable–stable–unstable–stable see Figure~\ref{fig:phase}). Direct numerical integration of~\eqref{eq:example_system}, not merely of the averaged equation, confirms this alternating pattern for initial radii on both sides of each $r_k$.
\end{remark}

\begin{figure}[H]
  \centering
  \includegraphics[width=0.80\linewidth]{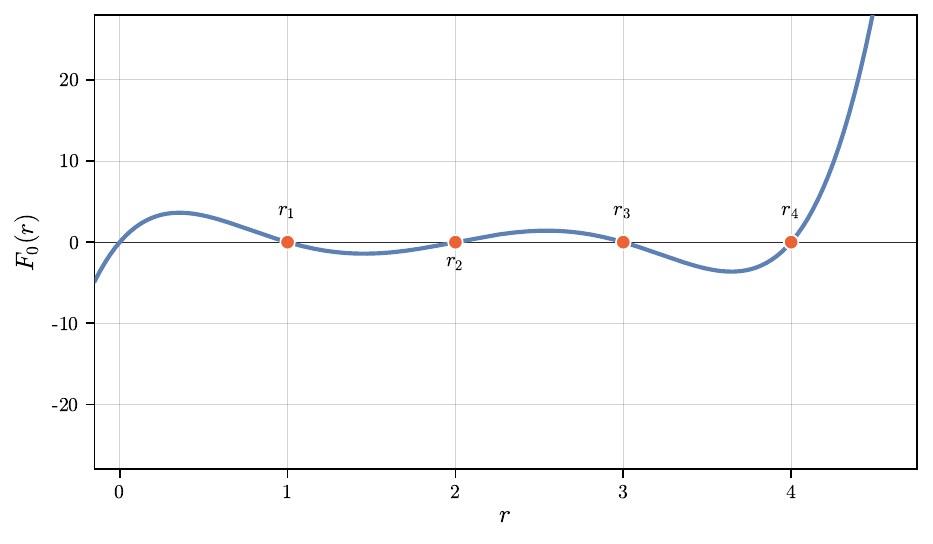}
  \caption{Graph of $F_0(r)$ for system~\eqref{eq:example_system}. The four positive roots $r_1=1,r_2=2,r_3=3,r_4=4$ correspond to four limit cycles; the alternating sign of $F_0'$ at consecutive roots reflects their alternating stability.}
  \label{fig:F0}
\end{figure}

\begin{figure}[H]
  \centering
  \includegraphics[width=0.60\linewidth]{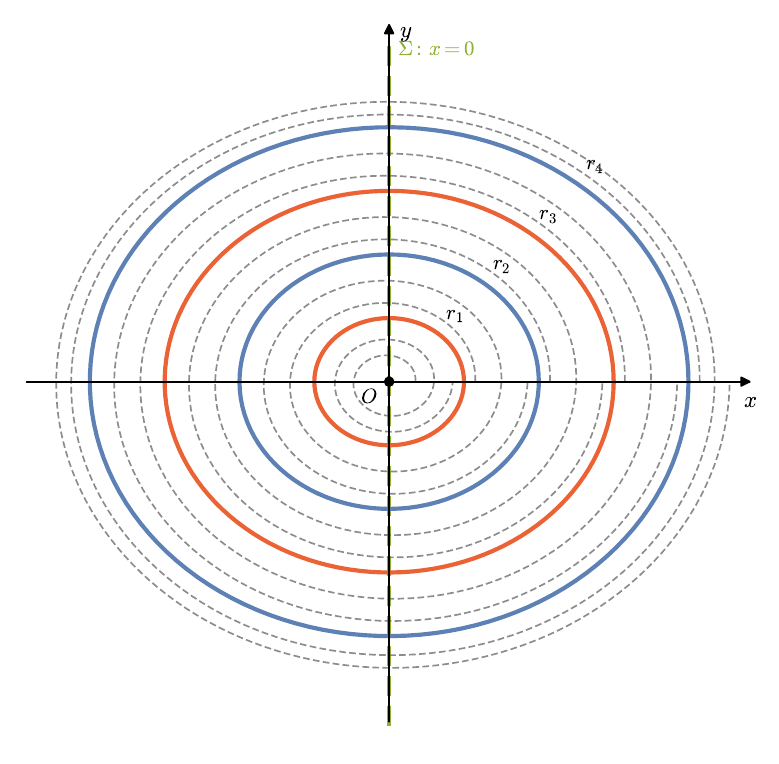}
  \caption{Schematic phase portrait of~\eqref{eq:example_system}. Solid curves are the four limit cycles, alternating stable (blue) and unstable (orange); dashed curves are representative trajectories spiralling toward or away from them. The dashed line $\Sigma=\{x=0\}$ is the switching manifold.}
  \label{fig:phase}
\end{figure}

\noindent\textbf{Funding.} No funding was received for this study.\medskip

\noindent\textbf{Author contributions.} CFA proved the main results and typed the article. ASO proved the main results and typed the article. All authors reviewed the manuscript.\medskip

\noindent\textbf{Data availability.} No datasets were generated or analyzed during the current study.

\section*{Declarations}

\noindent\textbf{Conflict of interest.} The authors declare no competing interests relevant to the content of this article.\medskip

\noindent\textbf{Use of AI tools.} During the preparation of this manuscript, the authors used an AI assistant (Claude, Anthropic) to improve the writing and to verify mathematical computations. All mathematical content, proofs, and conclusions are the sole responsibility of the authors.

\section*{Acknowledgments}
The authors thank the referees for their useful suggestions and corrections, which helped to improving this work.

\bibliography{Bibliography}
\bibliographystyle{acm}

\end{document}